\documentclass[11pt]{amsart}

\usepackage{verbatim,url}
\usepackage{amsmath}
\usepackage{enumerate,supertabular}
\usepackage{amssymb}
\usepackage{color}
\numberwithin{equation}{section}

\newtheorem{Theorem}{Theorem}[section]
\newtheorem{Lemma}[Theorem]{Lemma}
\newtheorem{Corollary}[Theorem]{Corollary}

\setbox0=\hbox{$+$}
\newdimen\plusheight
\plusheight=\ht 0
\newcommand\+{\;\lower\plusheight\hbox{$+$}\;}

\setbox0=\hbox{$-$}
\newdimen\minusheight
\minusheight=\ht0
\renewcommand\-{\;\lower\minusheight\hbox{$-$}\;}

\setbox0=\hbox{$\cdots$}
\newdimen\cdotsheight
\cdotsheight=\plusheight
\newcommand\cds{\lower\cdotsheight\hbox{$\cdots$}}

\newcommand{\bin}[2]{\begin{pmatrix} #1 \\ #2 \end{pmatrix}}

\begin{document}

\title[Supercongruences]{Supercongruences satisfied by coefficients of
$_2F_1$ hypergeometric series}
\author[Chan,  Kontogeorgis, Krattenthaler, Osburn]
{Heng Huat Chan,  Aristides Kontogeorgis, Christian Krattenthaler
and Robert Osburn}
\address{Department of Mathematics, National
University of Singapore, 2~Science Drive 2, Singapore 117543;
Max-Planck-Institut f\"ur Mathematik, Vivatsgasse 7, D-53111, Bonn, Germany}%
   \email{matchh@nus.edu.sg}

\address{
Department of Mathematics, University of the Aegean, 83200
Karlovassi, Samos, Greece} \email{aristides.kontogeorgis@gmail.com}

\address{Fakult\"at f\"ur Mathematik, Universit\"at Wien, Nordbergstrasze 15,
A-1090 Vienna, Austria\newline \leavevmode\indent {\it WWW}: {\tt
http://www.mat.univie.ac.at/\~{}kratt}}

\address{School of Mathematical Sciences, University College Dublin, Belfield, Dublin 4,
Ireland} \email{robert.osburn@ucd.ie}

\subjclass[2000]{Primary 11B83; Secondary 11A07}

\date{\today}

\dedicatory{Dedicated to Paulo Ribenboim on the occasion of his 80th
birthday}

\begin{abstract}
Recently, Chan, Cooper and Sica conjectured two congruences for
coefficients of classical $_2F_1$ hypergeometric series which also
arise from power series expansions of modular forms in terms of
modular functions. We prove these two congruences using
combinatorial properties of the coefficients.
\end{abstract}

\maketitle

\section{Introduction}

The sequence
$$\alpha_n=\sum_{k=0}^n \bin{n}{k}^2\bin{n+k}{k}^2,$$
introduced by R.~Ap\'ery \cite{AperAA} in his proof of the
irrationality of $\zeta(3)$, has many interesting arithmetical
properties.
For example, F.~Beukers \cite[p.~276]{Beukers-87} showed that
$\alpha_n$ arises from the power series expansion of a modular form
of weight 2 in terms of a modular
function.%
\footnote{Beukers gave the modular form in terms of Lambert series.
The product form can be found in \cite{KZ}.}
   More precisely, if $q=e^{2\pi i\tau}$ with $\text{Im}\,
\tau>0$,
\begin{gather*}
\eta(\tau) = q^{1/24}\prod_{n=1}^\infty (1-q^n),\\
Z(\tau) =
\frac{\bigl(\eta(2\tau)\eta(3\tau)\bigr)^7}{\bigl(\eta(\tau)\eta(6\tau)\bigr)^5}
\quad\text{and}\quad  X(\tau)  =
\left(\frac{\eta(\tau)\eta(6\tau)}{\eta(2\tau)\eta(3\tau)}\right)^{12}
   ,
\end{gather*}
   then
   \begin{equation}\label{eq1.1}
   Z(\tau)=\sum_{n=0}^\infty \alpha_nX^n(\tau).
\end{equation}
   Other properties of $\alpha_n$ were soon discovered
by S.~Chowla, J.~Cowles and M.~Cowles \cite{CCC}. They showed that
for all primes $p>3$,
$$\alpha_p\equiv \alpha_1 \pmod{p^3}.$$
Subsequently, I.~M.~Gessel \cite{Gessel} showed that, for all
positive integers $n$ and primes $p>3$,
\begin{equation}\label{eq1.2}
   \alpha_{np}\equiv \alpha_n\pmod{p^3}.
\end{equation}

Recently, an analogue of Ap\'ery numbers was found. The
corresponding sequence is formed by the Domb numbers \cite{CCL},
defined by
$$\beta_n=(-1)^n\sum_{k=0}^n\bin{n}{k}^2\bin{2k}{k}\bin{2(n-k)}{n-k}.$$
It can be shown (see \cite[(4.14)]{CCL}) that if
$$\mathcal Z(\tau) =
\frac{\bigl(\eta(\tau)\eta(3\tau)\bigr)^4}{\bigl(\eta(2\tau)\eta(6\tau)\bigr)^2}\quad
\text{and}\quad \mathcal X(\tau) =
\left(\frac{\eta(2\tau)\eta(6\tau)}{\eta(\tau)\eta(3\tau)}\right)^6,$$
then
\begin{equation}\label{eq1.3}
   \mathcal Z(\tau) =\sum_{n=0}^\infty \beta_n\mathcal X^n(\tau).
\end{equation}

In \cite{CCS}, H.~H.~Chan, S.~Cooper and F.~Sica showed, using
Gessel's idea, that
\begin{equation}\label{eq1.4}
\beta_{np}\equiv \beta_n\pmod{p^3}.
\end{equation}
The similarities between \eqref{eq1.1} and \eqref{eq1.3}, as well as
between \eqref{eq1.2} and \eqref{eq1.4}, indicated that perhaps
sequences arising from power series expansions of modular forms of
weight 2 in terms of modular functions may have properties similar
to \eqref{eq1.2} and \eqref{eq1.4}. Motivated by this idea, Chan,
Cooper and Sica constructed seven sequences $a_n$ from
$\eta$-quotients, analogues of theta functions and various modular
functions,
and they conjectured that, under certain conditions on the primes
$p$, these seven sequences satisfy congruences of the type
\begin{equation}\label{eq1.5}
      a_{np}\equiv a_n\pmod{p^r},
\end{equation}%
with $r= 1$, $2$, or $3$. Unfortunately, these conjectures do not
follow immediately from Gessel's method, and therefore new methods
have to be devised. The purpose of this  note is to give an
elementary approach to proving two of these conjectures.

\begin{Theorem}\label{T1.1}
Let $(a)_n=(a)(a+1)(a+2)\cdots (a+n-1)$.
\begin{enumerate}[\rm (a)]

\item For $p\equiv 1\pmod{4}$ and
$$s_n =64^n\frac{\left(\frac{1}{4}\right)_n^2}{(1)_n^2},$$
we have
\begin{equation}\label{eq1.6}
s_{np}\equiv s_n \pmod{p^2}.
\end{equation}

\item For $p\equiv 1\pmod{6}$ and
$$t_n =
108^n\frac{\left(\frac{1}{6}\right)_n\left(\frac{1}{3}\right)_n}{(1)_n^2},$$
we have
\begin{equation}\label{eq1.6.1} t_{np}\equiv t_n
\pmod{p^2}.
\end{equation}
\end{enumerate}
\end{Theorem}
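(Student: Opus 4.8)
The plan is to treat parts (a) and (b) uniformly: write the quotient $s_{np}/s_n$ (resp. $t_{np}/t_n$) as a ratio of products of integers coprime to $p$, and then evaluate that ratio modulo $p^2$ through its first-order term in $p$.

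First I would pass to product form. From $(1/4)_m=4^{-m}\prod_{j=1}^{m}(4j-3)$, $(1/6)_m=6^{-m}\prod_{j=1}^{m}(6j-5)$, $(1/3)_m=3^{-m}\prod_{j=1}^{m}(3j-2)$ and $(1)_m=m!$, splitting each product $\prod_{j=1}^{np}$ into the factors divisible by $p$ and the rest, one obtains
\begin{equation*}
\frac{s_{np}}{s_n}=\left(2^{n(p-1)}\frac{A}{B}\right)^{2},
\qquad
\frac{t_{np}}{t_n}=6^{n(p-1)}\frac{A_6A_3}{B^2}
\end{equation*}
(the powers of $2$ and $6$ coming from the constants $64$ and $108$), where $B=\prod_{1\le i\le np,\ p\nmid i}i$, $A=\prod_{1\le j\le np,\ p\nmid 4j-3}(4j-3)$, and $A_6,A_3$ are defined in the same way with $4j-3$ replaced by $6j-5$ and by $3j-2$. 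The hypotheses enter exactly here: for $p\equiv1\pmod4$ the least element of the progression $\{4j-3\}$ divisible by $p$ is $p$ itself, so the sub-product of the $p$-divisible factors equals $p^{n}\prod_{k=1}^{n}(4k-3)$, a scaled copy of the length-$n$ product, and it cancels against the corresponding part of $s_n$; the condition $p\equiv1\pmod6$ (which also forces $p\equiv1\pmod3$) makes the same thing work for $6j-5$ and $3j-2$, while $p\mid i$ handles the denominators. Since $A,A_6,A_3,B$ are coprime to $p$, both displayed quotients are $p$-adic units, and it suffices to prove $2^{n(p-1)}A\equiv B\pmod{p^2}$ and $6^{n(p-1)}A_6A_3\equiv B^2\pmod{p^2}$.

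Next I would group the surviving factors by residue class modulo $p$. For $r\in\{1,\dots,p-1\}$ and $l\in\{0,\dots,n-1\}$, the factor of $A$ congruent to $r$ coming from the $l$-th block of length $p$ has the form $r+p(e_r+4l)$, where $e_r\in\{0,1,2,3\}$ is defined by $4j-3=r+pe_r$ for the unique $j\in\{1,\dots,p\}$ with $4j-3\equiv r\pmod p$, whereas $B$ contributes $r+pl$. A short computation shows that $e_r$ depends only on $r$ modulo $4$ (one finds $e_r=0,3,2,1$ according as $r\equiv1,2,3,0\pmod4$), and similarly for part (b) the analogous integers $f_r\in\{0,\dots,5\}$ and $g_r\in\{0,1,2\}$ depend only on $r$ modulo $6$ and modulo $3$. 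Expanding
\begin{equation*}
\frac{A}{B}=\prod_{r=1}^{p-1}\prod_{l=0}^{n-1}\frac{r+p(e_r+4l)}{r+pl}
\equiv 1+p\sum_{r=1}^{p-1}\frac{1}{r}\left(ne_r+3\binom{n}{2}\right)\pmod{p^2},
\end{equation*}
killing the $\binom{n}{2}$-term by Wolstenholme's theorem $\sum_{r=1}^{p-1}r^{-1}\equiv0\pmod{p^2}$ ($p\ge5$), and using $2^{n(p-1)}\equiv1+np\,q_p(2)\pmod{p^2}$, where $q_p(m)=(m^{p-1}-1)/p$ is the Fermat quotient, the first target collapses to the single congruence
\begin{equation*}
q_p(2)+\sum_{r=1}^{p-1}\frac{e_r}{r}\equiv0\pmod p .
\end{equation*}
By the same manipulation, using $q_p(6)\equiv q_p(2)+q_p(3)\pmod p$, part (b) collapses to $q_p(6)+\sum_{r=1}^{p-1}(f_r+g_r)/r\equiv0\pmod p$.

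Finally I would verify these residue identities, which I expect to be the main obstacle, being the one step requiring real bookkeeping. Because $e_r$ depends only on $r\bmod4$, the sum $\sum_{r=1}^{p-1}e_r/r$ is a fixed linear combination of the four partial sums $C_i$ of $\sum_{r=1}^{p-1}1/r$ obtained by restricting to $r$ in a fixed residue class modulo $4$; the reflection $r\mapsto p-r$ gives $C_1\equiv-C_0$ and $C_3\equiv-C_2\pmod p$, so $\sum_r e_r/r\equiv C_0+C_2$, and writing $C_0=\tfrac14\sum_{j\le(p-1)/4}1/j$ and $C_2=\tfrac12\sum_{\substack{1\le j\le(p-3)/2\\ j\ \mathrm{odd}}}1/j$ one checks $C_0+C_2=\tfrac12\sum_{j\le(p-1)/2}1/j$. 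The classical congruence $\sum_{j\le(p-1)/2}1/j\equiv-2q_p(2)\pmod p$ then gives $\sum_r e_r/r\equiv-q_p(2)$, which is precisely what is needed. For part (b) the identical reflection argument expresses $\sum_r f_r/r$ and $\sum_r g_r/r$ through the truncated harmonic sums $\sum_{j\le(p-1)/m}1/j$ for $m=2,3,6$, which by the classical Lerch--Glaisher congruences are $\equiv-2q_p(2)$, $-\tfrac32q_p(3)$, $-2q_p(2)-\tfrac32q_p(3)\pmod p$ respectively; substituting these yields $\sum_r(f_r+g_r)/r\equiv-q_p(2)-q_p(3)\equiv-q_p(6)\pmod p$, completing the argument.
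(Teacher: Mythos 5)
Your proof is correct, but it follows a genuinely different route from the paper's. The paper first treats the case $n=1$ separately: it proves the auxiliary congruences $\left(\tfrac34\right)_p\equiv 3\left(\tfrac14\right)_p\pmod{p^3}$ and $4^p\left(\tfrac16\right)_p\equiv\left(\tfrac23\right)_p\pmod{p^3}$ by pairing factors symmetrically about their midpoints, rewrites $s_p$ as $\tfrac13\binom{4p}{2p}\binom{2p}{p}$ and $t_p$ as $\binom{3p}{p}\binom{2p}{p}$, and invokes the Kazandzidis--Bailey supercongruence $\binom{pa}{pb}\equiv\binom{a}{b}\pmod{p^3}$; it then passes from $n$ to $n+1$ by induction, showing that the correction factor $F(n)=4^{p-1}\prod_{j\ne(p-1)/4}(1+4j+4np)^2\prod_{i}(1+i+np)^{-2}$ is independent of $n$ modulo $p^2$ by the same pairing trick. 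You instead handle all $n$ simultaneously: after the (correct) cancellation of the $p$-divisible factors, you expand $s_{np}/s_n$ to first order in $p$ over residue classes modulo $p$, kill the $\binom{n}{2}$-term with Wolstenholme, and reduce both parts to a single Fermat-quotient identity $q_p(2)+\sum_r e_r/r\equiv 0\pmod p$ (respectively $q_p(6)+\sum_r(f_r+g_r)/r\equiv0$), which you verify via the reflection $r\mapsto p-r$ and the Lerch--Glaisher congruences for $\sum_{j\le(p-1)/m}1/j$; I checked the bookkeeping (including the values $e_r=0,3,2,1$ and the final reduction for part (b)) and it is sound, with numerical confirmation at $p=5$ and $p=7$. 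What the paper's method buys is the pretty intermediate identities linking $s_p$ and $t_p$ to products of binomial coefficients, at the cost of a separate base case plus an induction; what yours buys is uniformity in $n$, an explicit appearance of the Fermat quotient that explains why the congruence classes of $p$ matter, and an argument that transparently generalizes to other $\,_2F_1$ parameters, at the cost of importing Wolstenholme and the Glaisher congruences (the latter being essentially the same Hardy--Wright fact, Theorem~132, that the paper itself uses in Section~5).
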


The proof of \eqref{eq1.6} will be given in Sections~2 to 4. The
proof of \eqref{eq1.6.1} will be given in Section~5. Some parts of
the proof of \eqref{eq1.6.1} will only be sketched as they are
similar to that of \eqref{eq1.6}.

We conclude this introduction by indicating the analogues of
\eqref{eq1.1} and \eqref{eq1.3}.
%

Let
$$Z_2=\sum_{m=-\infty}^\infty\sum_{n=-\infty}^\infty
q^{m^2+n^2} \quad\text{and}\quad X_2=\frac{\eta^{12}(2\tau)}
{\displaystyle
Z_2^6 }.$$ Then the $s_n$'s are obtained from the expansion
$$Z_2=\sum_{n=0}^\infty s_nX_2^n.$$
Incidentally, the coefficients $s_n$ can be obtained from the
coefficients\break
$\left(\frac{1}{4}\right)_n\left(\frac{3}{4}\right)_n/(1)_n^2$
studied by S.~Ramanujan via a special case of Kummer's
transformation
$$\, _2F_1\left(\frac{1}{4},\frac{3}{4};1;x\right)=\frac{1}{\root{4}\of{1-x}}\, _2F_1\left(\frac{1}{4},\frac{1}{4};1;\frac{x}{x-1}\right),$$
where $\, _2F_1(a,b;c;z)$ is the classical Gau\ss ian hypergeometric
series.

Let $$Z_3=\sum_{m=-\infty}^\infty\sum_{n=-\infty}^\infty
q^{m^2+mn+n^2}\quad\text{and}\quad
X_3=\frac{\eta^6(\tau)\eta^6(\tau)} {
Z_3^6}.$$ Then the $t_n$'s are obtained from the expansion
$$Z_3=\sum_{n=0}^\infty t_nX_3^n.$$
The series associated with the coefficients $t_n$ were studied in
\cite{Borweins-Garvan} and \cite{Chan-Chua-Sole}, and these
coefficients are related to the coefficients
$\left(\frac{1}{3}\right)_n\left(\frac{2}{3}\right)_n/(1)_n^2$
studied by Ramanujan and the Borweins by means of the transformation
formula
$$\, _2F_1\left(\frac{1}{3},\frac{2}{3};1;x\right)= \,
_2F_1\left(\frac{1}{3},\frac{1}{6};1;4x(1-x)\right).$$

We remark here that, using \eqref{eq3.4}, it is immediate (see
\eqref{eq3.3} and \eqref{eq5.0.5}) that, if
$u_n=64^n\left(\frac{1}{4}\right)_n\left(\frac{3}{4}\right)_n/(1)_n^2$
and
$v_n=27^n\left(\frac{1}{3}\right)_n\left(\frac{2}{3}\right)_n/(1)_n^2$,
then
$$u_p\equiv u_1\pmod{p^2}\quad\text{and}\quad v_p\equiv v_1\pmod{p^2}.$$
Although it is not clear how one can deduce the corresponding
congruences for $s_p$ and $t_p$ from congruences satisfied by $u_p$
and $v_p$ using the $\, _2F_1$ transformation formulas, our proof of
Theorem~\ref{T1.1} is clearly motivated by these relations.



\section{A Lemma for the proof of \eqref{eq1.6}}

In this section, we establish a simple lemma which is interesting in
its own right.

\begin{Lemma}\label{L2.0} For positive integer $n$ and prime
$p\equiv 1\pmod{4}$,
\begin{equation}\label{eq2.1}
\left(\frac{3}{4}\right)_p\equiv
3\left(\frac{1}{4}\right)_p\pmod{p^3}.
\end{equation}

\end{Lemma}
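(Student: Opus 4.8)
The plan is to expand both Pochhammer symbols as explicit products and analyze the resulting rational number modulo $p^3$. Write
$$\left(\tfrac14\right)_p=\prod_{j=0}^{p-1}\left(\tfrac14+j\right)=\frac{1}{4^p}\prod_{j=0}^{p-1}(1+4j),\qquad \left(\tfrac34\right)_p=\prod_{j=0}^{p-1}\left(\tfrac34+j\right)=\frac{1}{4^p}\prod_{j=0}^{p-1}(3+4j).$$
Since $p\equiv 1\pmod 4$, as $j$ runs over $0,1,\dots,p-1$ the residues $1+4j$ run over a complete residue system modulo $p$, and likewise $3+4j$; in particular exactly one factor in each product is divisible by $p$. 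In the first product that factor is $1+4j_0$ where $4j_0\equiv -1\pmod p$, i.e. $j_0=(p-1)/4\cdot\text{(unit)}$ — more precisely $j_0$ is the unique $j\in\{0,\dots,p-1\}$ with $1+4j\equiv 0$, so $1+4j_0=p$ (because $0\le 1+4j_0\le 4p-3$ and it is a positive multiple of $p$ congruent to $1\bmod 4$; one checks $1+4\cdot\frac{p-1}{4}=p$ when $4\mid p-1$). Similarly the distinguished factor in the second product is $3+4j_1=3p$, attained at $j_1=\frac{3p-3}{4}$. Thus, pulling out these factors,
$$\left(\tfrac14\right)_p=\frac{p}{4^p}\prod_{\substack{j=0\\ j\ne j_0}}^{p-1}(1+4j),\qquad \left(\tfrac34\right)_p=\frac{3p}{4^p}\prod_{\substack{j=0\\ j\ne j_1}}^{p-1}(3+4j).$$
Hence $\left(\tfrac34\right)_p\big/\left(\tfrac14\right)_p=3\cdot\dfrac{\prod_{j\ne j_1}(3+4j)}{\prod_{j\ne j_0}(1+4j)}$, and the claim $\left(\tfrac34\right)_p\equiv 3\left(\tfrac14\right)_p\pmod{p^3}$ reduces, after clearing the common factor $p/4^p$ and one factor of $3$, to showing
$$\prod_{\substack{j=0\\ j\ne j_1}}^{p-1}(3+4j)\equiv \prod_{\substack{j=0\\ j\ne j_0}}^{p-1}(1+4j)\pmod{p^2},$$
since multiplying back by $p/4^p$ upgrades a congruence mod $p^2$ to mod $p^3$ for the original quantities.

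To establish this mod-$p^2$ identity I would compare both products against the "full" product $\prod_{k=0}^{p-1}(4k+r)$ over a complete residue system. For any fixed odd $r$ coprime to $p$, $\prod_{k=1,\,4k+r\not\equiv 0}^{p}(4k+r)\equiv \prod_{i=1}^{p-1} i = (p-1)!\equiv -1\pmod p$ by Wilson, but I need the refinement mod $p^2$. The standard device is: the polynomial $\prod_{i=1}^{p-1}(x-i)\equiv x^{p-1}-1 \pmod p$, and more sharply one uses $\prod_{i=1}^{p-1}(x+i) = \sum_{i=0}^{p-1} c_i x^i$ with $c_0=(p-1)!$, $c_1 = (p-1)!\,H_{p-1}$, and $H_{p-1}=\sum 1/i \equiv 0\pmod{p^2}$ by Wolstenholme (valid since $p>3$, which holds as $p\equiv 1\pmod 4$ forces $p\ge 5$). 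Evaluating $\prod_{\substack{j\ne j_0}}(1+4j)$ amounts to substituting $x = $ (the value playing the role of the omitted residue) into $\prod_{i=1}^{p-1}(x+i)$ after reindexing $1+4j$ through the bijection $j\mapsto 1+4j \bmod p$ onto $\{1,\dots,p-1\}$; the omitted term corresponds to the residue $0$, so $\prod_{j\ne j_0}(1+4j)\equiv \prod_{i=1}^{p-1} i \cdot(\text{correction})$. The key point is that the reindexing $j\mapsto (r+4j)\bmod p$ is, up to the choice of $r$, the \emph{same} permutation composed with a shift, so the two products $\prod_{j\ne j_1}(3+4j)$ and $\prod_{j\ne j_0}(1+4j)$ differ only in which integer lifts of the residues $1,\dots,p-1$ are used — and those lifts differ by controlled multiples of $p$. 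Writing $3+4j = (1+4j') + p\cdot(\text{integer})$ under the appropriate matching $j\leftrightarrow j'$ of residues, expanding the product and discarding terms of order $p^2$, the difference of the two products is $p$ times a sum that I expect to be $\equiv 0\pmod p$ precisely because it is (a multiple of) a harmonic-type sum $\sum_{i=1}^{p-1} m_i/i$ where $\sum m_i$ telescopes to $0$; Wolstenholme then is not even needed at this level — only $H_{p-1}\equiv 0\pmod p$ — but I would keep Wolstenholme in reserve in case the bookkeeping of the integer lifts produces a genuine $\sum 1/i$.

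The main obstacle is precisely this last bookkeeping: tracking the exact integer lifts in $\{1,\dots,4p-1\}$ of the residues $1+4j$ and $3+4j$ for $j=0,\dots,p-1$, identifying for each residue class how much the two lifts differ (an integer multiple of $p$, either $0$ or $\pm p$ in a predictable pattern governed by $\lfloor (4j+r)/p\rfloor$), and showing that the first-order correction $p\sum (\text{lift difference})/(\text{residue})$ vanishes mod $p^2$, i.e. that $\sum_{\text{residues }i}\epsilon_i / i \equiv 0 \pmod p$ for the relevant signs $\epsilon_i\in\{0,\pm1\}$. I expect this sum to reorganize, via the pairing $i\leftrightarrow p-i$ or $i\leftrightarrow -i$, into something manifestly zero, or into a multiple of $H_{p-1}$. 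Once that vanishing is in hand, the congruence $\prod_{j\ne j_1}(3+4j)\equiv\prod_{j\ne j_0}(1+4j)\pmod{p^2}$ follows, and multiplying through by $3p/4^p$ yields \eqref{eq2.1}.
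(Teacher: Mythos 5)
Your opening reduction is fine and coincides with the paper's first step: pulling out the unique multiples of $p$ (namely $p$ at $j_0=\frac{p-1}{4}$ and $3p$ at $j_1=\frac{3p-3}{4}$) reduces \eqref{eq2.1} to $\prod_{j\neq j_1}(3+4j)\equiv\prod_{j\neq j_0}(1+4j)\pmod{p^2}$, which is exactly the paper's \eqref{eq2.2} (see also \eqref{eq2.4}). But that congruence \emph{is} the lemma, and everything after it in your write-up is a plan rather than a proof: you explicitly defer the decisive step --- the bookkeeping of the integer lifts and the vanishing of the resulting weighted harmonic sum --- as something you ``expect'' to work out. As it stands this is a genuine gap. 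Moreover, the guessed shape of the data is off: for $p\equiv 1\pmod 4$ the term of $\prod_{j\neq j_1}(3+4j)$ lying in the residue class $i\not\equiv 0\pmod p$ is $i+a_ip$ with $a_i\equiv 3-i\pmod 4$, while the corresponding term of $\prod_{j\neq j_0}(1+4j)$ is $i+b_ip$ with $b_i\equiv 1-i\pmod 4$; hence $a_i-b_i=\pm 2$ for \emph{every} $i$ (never $0$ or $\pm 1$), so the sum you must kill is $2\sum_{i=1}^{p-1}\epsilon_i/i$ with $\epsilon_i=+1$ for $i\equiv 0,1\pmod 4$ and $\epsilon_i=-1$ for $i\equiv 2,3\pmod 4$, not a sum with weights in $\{0,\pm1\}$.

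The good news is that your plan can be completed. Since $\prod_{i=1}^{p-1}(i+a_ip)\equiv (p-1)!\,\bigl(1+p\sum_{i=1}^{p-1}a_i/i\bigr)\pmod{p^2}$, and likewise with $b_i$, the needed congruence is $\sum_{i=1}^{p-1}(a_i-b_i)/i\equiv 0\pmod p$; because $p\equiv 1\pmod 4$, the substitution $i\mapsto p-i$ preserves the sign pattern $\epsilon_i$ while sending $1/i$ to $-1/i$ modulo $p$, so the sum equals its own negative and vanishes --- precisely the symmetry you anticipated, and neither Wolstenholme nor even $H_{p-1}\equiv 0\pmod p$ is needed. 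Had you carried this out, you would have a correct proof, and one genuinely different in its second half from the paper's: the paper never introduces lifts or harmonic sums, but instead pairs the factors symmetrically about the removed multiple of $p$ (each pair being $\equiv -k^2\pmod{p^2}$) and then shows by a second pairing that the two leftover blocks are each congruent to $\prod_{k=0}^{(p-5)/4}\bigl(-\tfrac14-k-k^2\bigr)\pmod{p^2}$.
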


\begin{proof}
By isolating the terms involving multiples of $p$ on both sides of
\eqref{eq2.1}, we find that it suffices to prove the congruence

\begin{equation}
\label{eq2.2}
\prod_{k=0}^{\frac{3p-7}{4}}\left(\frac{3}{4}+k\right)\prod_{k=\frac{3p+1}{4}}^{p-1}
\left(\frac{3}{4}+k\right) \equiv
\prod_{k=0}^{\frac{p-5}{4}}\left(\frac{1}{4}+k\right)\prod_{k=\frac{p+3}{4}}^{p-1}
\left(\frac{1}{4}+k\right) \pmod{p^2}.\end{equation} Let the product
on the left-hand side be $L(p)$ and the product on the right-hand
side be $R(p)$. We group some of the terms in $L(p)$ in pairs as
follows:
$$\left(\frac{3}{4}+\frac{3p-3}{4}-k\right)
\left(\frac{3}{4}+\frac{3p-3}{4}+k\right)$$ for $$1\leq k\leq
\frac{p-1}{4}.$$ We then conclude that
$$L(p) \equiv \prod_{k=1}^{\frac{p-1}{4}} (-k^2)\prod_{k=0}^{\frac{p-3}{2}}\left(\frac{3}{4}+k\right)\pmod{p^2}.$$

Similarly, for  $$1\leq k\leq \frac{p-1}{4},$$  we perform the
following pairing of some of the terms in the product in $R(p)$:
$$\left(\frac{1}{4}+\frac{p-1}{4}-k\right)
\left(\frac{1}{4}+\frac{p-1}{4}+k\right).$$
Hence we have
$$R(p) \equiv \prod_{k=1}^{\frac{p-1}{4}} (-k^2)\prod_{k=\frac{p+1}{2}}^{p-1}\left(\frac{1}{4}+k\right) \pmod{p^2}.$$

It now remains to verify that
\begin{equation}\label{eq2.3}\prod_{k=0}^{\frac{p-3}{2}}
\left(\frac{3}{4}+k\right)\equiv
\prod_{k=\frac{p+1}{2}}^{p-1}\left(\frac{1}{4}+k\right)\pmod{p^2}.\end{equation}

Denoting the left-hand side of \eqref{eq2.3} by $l(p)$ and the
right-hand side by $r(p)$, we observe that we can write $l(p)$ and
$r(p)$ as
\begin{align} \label{eq:l(p)}
l(p)&=\prod_{k=0}^{\frac{p-5}{4}}
\left(\frac{3}{4}+\frac{p-5}{4}-k\right)\left(\frac{3}{4}+\frac{p-1}{4}+k\right)\\
&\equiv \prod_{k=0}^{\frac{p-5}{4}}
\left(-\frac{1}{4}-k-k^2\right)\pmod{p^2} \notag
\end{align}
and
\begin{align} \label{eq:r(p)}
r(p)&=\prod_{k=0}^{\frac{p-5}{4}}
\left(\frac{1}{4}+\frac{p+1}{2}+\frac{p-5}{4}-k\right)\left(\frac{1}{4}+
\frac{p+1}{2}+\frac{p-1}{4}+k\right)\\
&\equiv \prod_{k=0}^{\frac{p-5}{4}}
\left(-\frac{1}{4}-k-k^2\right)\pmod{p^2}, \notag
\end{align}
which implies \eqref{eq2.3}. This completes the proof of
\eqref{eq2.2}.
\end{proof}

As a consequence, we have the following congruence.

\begin{Corollary}
Let $p$ be a prime such that $p\equiv 1\pmod{4}$. Then
\begin{equation}\label{eq2.4}
\prod_{\substack{k=0 \\ k\neq
\frac{3p-3}{4}}}^{p-1}(3+4k) \equiv \prod_{\substack{k=0\\
k\neq\frac{p-1}{4}}}^{p-1} (1+4k) \pmod{p^2}.
\end{equation}
\end{Corollary}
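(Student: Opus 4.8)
The plan is to unwind the Pochhammer symbols in Lemma~\ref{L2.0} into the products of arithmetic progressions appearing in \eqref{eq2.4}. First I would write
$$\left(\frac{1}{4}\right)_p=\frac{1}{4^p}\prod_{k=0}^{p-1}(1+4k)
\qquad\text{and}\qquad
\left(\frac{3}{4}\right)_p=\frac{1}{4^p}\prod_{k=0}^{p-1}(3+4k).$$
Since $p>3$, the factor $4^p$ is invertible modulo every power of $p$, so the congruence \eqref{eq2.1} is equivalent to the integer congruence
$$\prod_{k=0}^{p-1}(3+4k)\equiv 3\prod_{k=0}^{p-1}(1+4k)\pmod{p^3}.$$

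Next I would locate, on each side, the unique factor divisible by $p$. Because $p\equiv 1\pmod 4$, both $\frac{p-1}{4}$ and $\frac{3p-3}{4}$ are integers lying in $\{0,1,\dots,p-1\}$, and one checks directly that $1+4\cdot\frac{p-1}{4}=p$ and $3+4\cdot\frac{3p-3}{4}=3p$. These are moreover the only indices $k$ in range for which $p\mid 1+4k$, respectively $p\mid 3+4k$, since in each case $k$ is determined modulo $p$ (by $4k\equiv-1$, resp. $4k\equiv-3$) and $4$ is invertible mod $p$. Hence
$$\prod_{k=0}^{p-1}(3+4k)=3p\prod_{\substack{k=0\\ k\neq\frac{3p-3}{4}}}^{p-1}(3+4k),
\qquad
\prod_{k=0}^{p-1}(1+4k)=p\prod_{\substack{k=0\\ k\neq\frac{p-1}{4}}}^{p-1}(1+4k).$$

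Substituting these into the displayed mod-$p^3$ congruence gives
$$3p\prod_{\substack{k=0\\ k\neq\frac{3p-3}{4}}}^{p-1}(3+4k)
\equiv 3p\prod_{\substack{k=0\\ k\neq\frac{p-1}{4}}}^{p-1}(1+4k)\pmod{p^3}.$$
Both sides are divisible by $p$, and the common factor $3$ is a unit modulo $p$; cancelling $3p$ therefore lowers the modulus from $p^3$ to $p^2$ and yields exactly \eqref{eq2.4}. I expect no serious obstacle here: the only points needing care are the hypothesis $p\equiv1\pmod 4$ (required for the two exceptional indices to be integers in the range $\{0,\dots,p-1\}$) and the division of the congruence by $p$, which consumes one power of $p$ and leaves precisely the claimed $p^2$.
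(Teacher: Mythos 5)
Your derivation is correct and is essentially the argument the paper intends: you peel off the unique factor divisible by $p$ on each side of \eqref{eq2.1} (namely $3p$ and $p$, at $k=\frac{3p-3}{4}$ and $k=\frac{p-1}{4}$) and cancel $3p$, which is exactly the ``isolating the terms involving multiples of $p$'' step, run in reverse, that the paper uses to pass between \eqref{eq2.1} and \eqref{eq2.2}. No gaps; the checks you flag (integrality and uniqueness of the exceptional indices, invertibility of $4$ and $3$, losing one power of $p$ upon cancellation) are precisely the points that need care, and you handle them correctly.
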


\section{Simple properties of $s_n$ and the congruence \eqref{eq1.6} for $n=1$}

We first  observe that
\begin{equation}\label{eq3.1} s_n=\frac{\left( \frac{1}{4}\right)
_{n}^2 64^n}{ (n!)^2}= \frac{4^n}{(n!)^2} \prod_{i=0}^{n-1}
(1+4i)^2.\end{equation}

\begin{Lemma}\label{L2.1}
If $p$ is a prime satisfying $p\equiv 1\pmod{4}$, then
\begin{equation}\label{eq3.2}
    s_p\equiv s_1\pmod{p^2}.
\end{equation}
\end{Lemma}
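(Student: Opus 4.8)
The plan is to prove $s_p \equiv s_1 \pmod{p^2}$ directly from the product formula \eqref{eq3.1}, namely $s_n = \frac{4^n}{(n!)^2}\prod_{i=0}^{n-1}(1+4i)^2$, using Lemma~\ref{L2.0} as the key arithmetic input. First I would write out $s_p = \frac{4^p}{(p!)^2}\prod_{i=0}^{p-1}(1+4i)^2$ and isolate the unique index $i$ in the range $0 \le i \le p-1$ for which $1+4i$ is divisible by $p$: since $p \equiv 1 \pmod 4$, this is $i = \frac{3p-3}{4}$ (one checks $4\cdot\frac{3p-3}{4}+1 = 3p-2 \equiv 0 \pmod p$, wait — rather $3p-2$, so in fact the relevant term is $3p-2$; the correct index making $1+4i \equiv 0 \pmod p$ is $i$ with $4i \equiv -1 \equiv p-1 \pmod p$, and among $0,\dots,p-1$ this is $i = \frac{3p-3}{4}$ precisely because $4 \cdot \frac{3p-3}{4} + 1 = 3p - 2$, hmm). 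In any case there is exactly one such index, call it $i_0$, and $1 + 4i_0 = p \cdot u$ for a unit $u$; the contribution of this factor is what forces us to work modulo $p^2$ rather than $p^3$.

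Next I would factor out the $p$-divisible term and rewrite
\begin{equation*}
s_p = \frac{4^p}{(p!)^2}\,(1+4i_0)^2 \prod_{\substack{i=0\\ i\neq i_0}}^{p-1}(1+4i)^2.
\end{equation*}
Here $(p!)^2 = p^2\,\bigl((p-1)!\bigr)^2$, so the explicit factor $p^2$ in the denominator cancels against the $p^2$ hidden in $(1+4i_0)^2$, leaving $s_p \equiv \frac{4^p\, u^2}{\bigl((p-1)!\bigr)^2}\prod_{i\neq i_0}(1+4i)^2 \pmod{p^2}$ after noting $(1+4i_0)^2/p^2 = u^2$. By Fermat/Wilson-type reductions, $4^p \equiv 4 \pmod p$ — but since we need accuracy mod $p^2$ I would instead keep $4^p = 4 \cdot 4^{p-1}$ and use $4^{p-1} \equiv 1 \pmod p$ together with the fact that the whole remaining product is a $p$-adic unit, so only congruences mod $p$ are needed for the unit factors once the single power of $p$ has been extracted and matched. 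The upshot should be that proving \eqref{eq3.2} reduces to an identity mod $p^2$ among the products $\prod_{i \neq i_0}(1+4i)$, the Wilson product $(p-1)!$, and the unit $u$, and this is exactly the content of the Corollary (equation \eqref{eq2.4}) together with Lemma~\ref{L2.0}: the congruence $\prod_{k\neq \frac{3p-3}{4}}(3+4k) \equiv \prod_{k \neq \frac{p-1}{4}}(1+4k) \pmod{p^2}$ and $\left(\tfrac34\right)_p \equiv 3\left(\tfrac14\right)_p$ are the precise statements needed to control these products. I would therefore express $s_p$ in terms of $\left(\tfrac14\right)_p^2/(1)_p^2 \cdot 64^p$ and apply Lemma~\ref{L2.0} (squared, to get $\left(\tfrac14\right)_p^2$ versus $\left(\tfrac34\right)_p^2$) after pairing numerator and denominator Pochhammer symbols appropriately, comparing against $s_1 = 64 \cdot \frac{(1/4)^2}{1} = 64 \cdot \frac{1}{16} = 4$.

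The main obstacle I anticipate is the careful bookkeeping of the single factor of $p$: one must track exactly where the power of $p$ sits in numerator versus denominator, verify that the residual unit $u = (1+4i_0)/p$ matches correctly on both sides, and ensure that no second factor of $p$ is lurking (which would be fatal since we only claim mod $p^2$ and indeed mod $p^3$ is false here). Equivalently, the delicate point is relating $64^p \left(\tfrac14\right)_p^2/(1)_p^2$ to $64\left(\tfrac14\right)_1^2/(1)_1^2$: the factor $(1)_p = p!$ contributes one $p$ in the denominator, $\left(\tfrac14\right)_p^2$ contributes one $p$ in the numerator (from the term $\tfrac14 + \tfrac{3p-3}{4}$, hmm, at $k$ with $1+4k \equiv 0$), and these must cancel leaving a unit congruent mod $p^2$ to the right quantity — and Lemma~\ref{L2.0} is precisely what upgrades the naive mod-$p$ cancellation to mod $p^2$. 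Once that cancellation is set up cleanly, the remainder is a routine application of \eqref{eq2.1} and \eqref{eq2.4}.
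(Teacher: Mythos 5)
Your overall plan starts the same way as the paper (isolate the $p$-divisible factor in $\left(\tfrac14\right)_p^2$, cancel it against the $p$ in $(1)_p$, and use Lemma~\ref{L2.0}/\eqref{eq2.4} to trade one factor $\left(\tfrac14\right)_p$ for $\tfrac13\left(\tfrac34\right)_p$), but there is a genuine gap at the end: Lemma~\ref{L2.0} and the Corollary \eqref{eq2.4} only convert $64^p\left(\tfrac14\right)_p^2/(1)_p^2$ into $\tfrac13\cdot 64^p\left(\tfrac14\right)_p\left(\tfrac34\right)_p/(1)_p^2$ modulo $p^2$; they do not \emph{evaluate} this quantity. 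The paper's proof needs a further, independent ingredient: rewriting the product as $\tfrac13\binom{4p}{2p}\binom{2p}{p}$ and invoking the Kazandzidis--Bailey congruence \eqref{eq3.4}, $\binom{pa}{pb}\equiv\binom{a}{b}\pmod{p^3}$, to get the value $4=s_1$ modulo $p^2$. Your proposal never supplies this step (or any substitute for it), yet asserts that the required congruence ``is exactly the content'' of \eqref{eq2.4} and \eqref{eq2.1}; it is not, and without it the proof does not close.

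Two further points. First, your claim that after extracting the power of $p$ ``only congruences mod $p$ are needed for the unit factors'' is false: once the $p^2$ in $(1+4i_0)^2$ cancels against the $p^2$ in $(p!)^2$, the assertion $s_p\equiv s_1\pmod{p^2}$ is a congruence between $p$-adic units modulo $p^2$, so every remaining factor — including $4^{p-1}$, whose Fermat quotient is exactly the sort of term that must cancel against Wolstenholme-type harmonic-sum contributions — has to be controlled modulo $p^2$, not merely modulo $p$. This is precisely the difficulty that \eqref{eq3.4} (or, in the companion case, the harmonic-sum computation of Lemma~\ref{L5.1}) resolves. Second, a smaller slip: the index with $1+4i\equiv 0\pmod p$ is $i_0=\tfrac{p-1}{4}$, where $1+4i_0=p$ exactly (so your unit $u$ equals $1$); the index $\tfrac{3p-3}{4}$ is the exceptional one for the product $\prod(3+4k)$, where $3+4k=3p$ — your ``hmm'' flags this confusion but leaves it unresolved, and the bookkeeping of the factor $3$ coming from $3p$ is exactly where the $\tfrac13$ in the paper's $\tfrac13\binom{4p}{2p}\binom{2p}{p}$ arises.
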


\begin{proof}
From \eqref{eq3.1}, we find that
   $$s_p=\frac{4^p}{(p!)^2}\prod_{i=0}^{p-1}(1+4i)^2.$$
   Observe
that
$$s_p = \frac{4^p}{((p-1)!)^2}
\prod_{\substack{i=0 \\ i\neq \frac{p-1}{4}}}^{p-1} (1+4i)^2.$$ By
\eqref{eq2.4}, we find that
\begin{align*}
s_p& \equiv \frac{4^p}{((p-1)!)^2}
\prod_{\substack{i=0 \\
i\neq \frac{p-1}{4}}}^{p-1}(1+4i) \prod_{\substack{k=0 \\
k\neq \frac{3p-3}{4}}}^{p-1}(3+4k) \pmod{p^2}
   \\
&\equiv \frac{1}{3}\frac{4^p}{(p!)^2} \prod_{i=0}^{p-1}
(1+4i)\prod_{i=0}^{p-1}(3+4i) \pmod{p^2}.\end{align*} Therefore,
\begin{align}\label{eq3.3}
s_p &\equiv \frac{1}{3}\frac{4^p}{(p!)^2}
\prod_{i=0}^{p-1}(1+4i)(3+4i)\pmod{p^2}
\\
&\equiv
\frac{1}{3}\frac{4^p}{(p!)^2}\prod_{i=0}^{p-1}\frac{(1+4i)(3+4i)(2+4i)(4+4i)}{2^2(1+2i)(2+2i)}\pmod{p^2}
\notag \\
&\equiv
\frac{1}{3}\bin{4p}{2p}\bin{2p}{p}\pmod{p^2}.\notag\end{align} It is
known  that (see \cite{Kaz}, respectively \cite[Theorem~4]{Bailey}),
for positive integers $a$ and $b$, with $a\geq b$, and primes $p>3$,
\begin{equation}\label{eq3.4}\bin{pa}{pb}\equiv \bin{a}{b}\pmod{p^3}.\end{equation}
Using \eqref{eq3.4} in the last expression in  \eqref{eq3.3}, we
conclude that
$$s_p\equiv \frac{1}{3}\bin{4p}{2p}\bin{2p}{p}\equiv \frac{1}{3}\bin{4}{2}\bin{2}{1}\equiv 4\pmod{p^2}.$$
\end{proof}


We end this section with a simple observation. Let
\begin{equation}\label{eq3.5} F(n) = 4^{p-1}\prod_{\substack{j=0 \\
j\neq \frac{p-1}{4}}}^{p-1} \left(1+4j+4np\right)^2
\prod_{i=0}^{p-2} \frac{1}{\left(1+i+np\right)^2 }.\end{equation}
From \eqref{eq3.2}, we have the following congruence for $F(0)$.

\begin{Corollary}
\begin{equation}\label{eq3.6}
F(0)\equiv 1\pmod{p^2}.
\end{equation}
\end{Corollary}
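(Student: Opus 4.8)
The plan is to recognize that $F(0)$ is, up to a single explicit factor, precisely the quantity $s_p$ that was already manipulated in the proof of Lemma~\ref{L2.1}, so that the Corollary is an immediate consequence of that lemma. First I would set $n=0$ in \eqref{eq3.5}, which gives
$$F(0)=4^{p-1}\prod_{\substack{j=0\\ j\neq \frac{p-1}{4}}}^{p-1}(1+4j)^2\;\prod_{i=0}^{p-2}\frac{1}{(1+i)^2}=\frac{4^{p-1}}{\bigl((p-1)!\bigr)^2}\prod_{\substack{j=0\\ j\neq \frac{p-1}{4}}}^{p-1}(1+4j)^2.$$
On the other hand, starting from \eqref{eq3.1} and pulling the factor indexed by $i=\frac{p-1}{4}$ out of the product while absorbing the single factor $p$ sitting inside $(p!)^2$ — exactly the rewriting carried out at the beginning of the proof of Lemma~\ref{L2.1} — one obtains
$$s_p=\frac{4^p}{\bigl((p-1)!\bigr)^2}\prod_{\substack{i=0\\ i\neq \frac{p-1}{4}}}^{p-1}(1+4i)^2=4\,F(0).$$

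Next I would invoke Lemma~\ref{L2.1}. Since $s_1=64\left(\frac14\right)_1^2/(1)_1^2=4$, the congruence \eqref{eq3.2} reads $s_p\equiv 4\pmod{p^2}$, and combining this with $s_p=4F(0)$ yields $4F(0)\equiv 4\pmod{p^2}$. Because $p$ is odd, $4$ is invertible modulo $p^2$, so dividing through by $4$ gives $F(0)\equiv 1\pmod{p^2}$, which is \eqref{eq3.6}. There is essentially no obstacle to overcome here: the entire content is the bookkeeping identity $s_p=4F(0)$, and the only arithmetic beyond Lemma~\ref{L2.1} is the trivial evaluation $s_1=4$ together with the observation $p\nmid 4$. (The point of isolating $F(0)$ in this form is presumably that the corollary will later be applied with $n\neq 0$, where $F(n)$ encodes the ratio $s_{(n+1)p}/s_{np}$-type quantities relevant to \eqref{eq1.6}.)
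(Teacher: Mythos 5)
Your proof is correct and follows exactly the route the paper intends: the Corollary is stated as a consequence of \eqref{eq3.2}, and your bookkeeping identity $s_p=4F(0)$ (via the rewriting of $s_p$ with the factor $p$ extracted, as in the proof of Lemma~\ref{L2.1}) together with $s_1=4$ and the invertibility of $4$ modulo $p^2$ is precisely the omitted verification.
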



\section{Completion of the proof of \eqref{eq1.6}}

\begin{Lemma} Let $F(n)$ be defined as in \eqref{eq3.5} and suppose $p\equiv 1\pmod{4}$.
Then $F(n)\pmod{p^2}$ is independent of $n$.
\end{Lemma}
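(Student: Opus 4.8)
The plan is to show that $F(n+1)/F(n)\equiv 1\pmod{p^2}$ for every nonnegative integer $n$, since combined with the Corollary giving $F(0)\equiv 1\pmod{p^2}$ this yields the claim by induction. From the definition \eqref{eq3.5}, the ratio $F(n+1)/F(n)$ is a product of terms of the form $\left(1+4j+4(n+1)p\right)^2/\left(1+4j+4np\right)^2$ over $j\neq\frac{p-1}{4}$, times $\left(1+i+np\right)^2/\left(1+i+(n+1)p\right)^2$ over $0\le i\le p-2$. The first observation is that, working modulo $p^2$, each factor $1+4j+4mp$ depends on $m$ only through $4mp\pmod{p^2}$, i.e. only through $m\bmod p$; so in fact $F(n)\pmod{p^2}$ should already be visibly periodic in $n$ with period $p$, and it suffices to handle one step of the recursion abstractly.

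The main idea will be to telescope. Write $A(m)=\prod_{j\neq(p-1)/4}(1+4j+4mp)$ and $B(m)=\prod_{i=0}^{p-2}(1+i+mp)=\frac{(p-1+mp)!}{(mp)!}$ (interpreting the factorials appropriately, or just as the displayed product), so that $F(n)=4^{p-1}A(n)^2/B(n)^2$ and we want $A(n+1)^2 B(n)^2\equiv A(n)^2 B(n+1)^2\pmod{p^2}$. First I would analyze $B(n+1)/B(n)=\prod_{i=0}^{p-2}\frac{1+i+(n+1)p}{1+i+np}$: the numerator runs over the residues $1+(n+1)p,\dots,(p-1)+(n+1)p$ and the denominator over $1+np,\dots,(p-1)+np$; modulo $p^2$ one can shift indices so that numerator and denominator differ only in a controlled way, producing a clean closed form — essentially $B(n+1)/B(n)\equiv(1+(n+1)p)^{-1}\cdot$(something)$\pmod{p^2}$ after cancellation, or more cleanly one shows $B(n+1)\equiv B(n)\cdot\frac{(p-1)!\,(\text{correction})}{\cdots}$. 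Then I would do the analogous computation for $A(n+1)/A(n)$, where the excluded index $j=(p-1)/4$ corresponds exactly to the factor that would have been $1+4\cdot\frac{p-1}{4}=p$, i.e. a multiple of $p$; the whole point of excluding it is that its presence would make the product vanish mod $p^2$, and its absence is what makes the ratio well-behaved. The key structural fact to exploit is that $\{1+4j:0\le j\le p-1\}$ and $\{1,2,\dots,p-1,\text{plus }p\}$ coincide as sets modulo $p$ (since $\gcd(4,p)=1$), so after removing the multiple of $p$ both $A$ and $B$ are, modulo $p^2$, built from the same residue classes, and the $n$-dependence enters both identically.

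The cleanest route is probably: reduce the whole ratio to a product over $i$ from $0$ to $p-1$ (reinserting a compensating factor for the missing term), use $1+4j\equiv$ a permutation of $1,\dots,p$ mod $p$, and then observe that every factor has the shape $\frac{x+(n+1)p}{x+np}\equiv 1+\frac{p}{x}\pmod{p^2}$ (valid when $p\nmid x$), so that
\begin{equation*}
\frac{F(n+1)}{F(n)}\equiv\prod_{x}\left(1+\frac{p}{x}\right)^{\pm 2}\equiv 1+\Big(\sum_x \pm\frac{2p}{x}\Big)\pmod{p^2},
\end{equation*}
and the sum $\sum \frac{1}{x}$ over the relevant residues telescopes/cancels against its counterpart from the denominator because the multisets of residues in numerator and denominator agree. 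The factors involving multiples of $p$ (the single term $4mp$-ish contributions and the $p\mid x$ cases) must be tracked separately and shown to match on both sides — this is exactly where Lemma~\ref{L2.0} or its Corollary \eqref{eq2.4} re-enters, controlling the one ``bad'' factor.

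\textbf{Main obstacle.} The delicate point is the bookkeeping around the index $j=\frac{p-1}{4}$ (and its shift under $n\mapsto n+1$): one must verify that in passing from $F(n)$ to $F(n+1)$ the excluded term stays ``aligned'' with a multiple of $p$ modulo $p^2$, not merely modulo $p$, so that the cancellation argument producing $1+O(p^2)$ is legitimate. I expect this to require a short separate computation isolating that factor, parallel to the term-isolation trick used in the proof of Lemma~\ref{L2.0}, and this — rather than the telescoping of the generic factors — will be the crux of the argument.
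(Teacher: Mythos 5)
Your route--proving $F(n+1)/F(n)\equiv 1\pmod{p^2}$ by expanding each factor to first order in $p$--is viable, and it is genuinely different from the paper's argument: the paper never forms a ratio, but instead pairs the factors of $F(n)$ symmetrically about the multiples of $p$ that occur as centres (the pairs $(np+k)\bigl((n+1)p-k\bigr)$ in the denominator, and pairs centred at $(4n+1)p$ and $(4n+3)p$ in the numerator), so that each pair is of the form $(X-a)(X+a)\equiv -a^2\pmod{p^2}$ with $p\mid X$ and the $n$-dependence disappears term by term, with no harmonic sums at all. Your version can be pushed through, but two points need repair. First, the numerator factors move by $4p$, not $p$: one has
\begin{equation*}
\frac{1+4j+4(n+1)p}{1+4j+4np}\equiv 1+\frac{4p}{1+4j}\pmod{p^2},
\qquad
\frac{1+i+np}{1+i+(n+1)p}\equiv 1-\frac{p}{1+i}\pmod{p^2},
\end{equation*}
so the two families do \emph{not} cancel pairwise merely because the residue multisets agree; you are left with $1+2p\bigl(4\sum_{j\neq(p-1)/4}(1+4j)^{-1}-\sum_{i=1}^{p-1}i^{-1}\bigr)$, and since both index sets reduce modulo $p$ to exactly $\{1,2,\dots,p-1\}$ (the excluded $j=\frac{p-1}{4}$ removing precisely the residue $0$), what you actually need is the standard fact $\sum_{x=1}^{p-1}x^{-1}\equiv 0\pmod{p}$. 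This is elementary, but it is the real engine of the cancellation and must be invoked explicitly.

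Second, the ``main obstacle'' you flag is a phantom. The excluded index is the same $j=\frac{p-1}{4}$ for every $n$; the omitted factor is literally $(4n+1)p$, and every retained factor $1+4j+4np$ (with $j\neq\frac{p-1}{4}$) and $1+i+np$ (with $0\le i\le p-2$) is a unit modulo $p$, so there is no alignment to verify and no separate computation at that index. In particular, Lemma~\ref{L2.0} and \eqref{eq2.4} play no role in the independence statement: they are used only to obtain the base value $F(0)\equiv 1\pmod{p^2}$ (that is, \eqref{eq3.6}, via $s_p\equiv s_1$), which the present lemma then propagates to all $n$. With the harmonic-sum step supplied and the factor $4$ tracked correctly, your telescoping argument is a correct alternative proof; the paper's pairing trick buys the same conclusion with purely multiplicative, term-by-term cancellations.
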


\begin{proof}%
We first consider the denominator of $F(n)$. We have
\begin{align*} \prod_{i=0}^{p-2} \frac{1}{\left(1+i+np\right)^2 }
&=\prod_{k=1}^{(p-1)/2} \frac{1}{(np+k)^2((n+1)p-k)^2}\\
&\equiv\prod_{k=1}^{(p-1)/2}\frac{1}{k^2(p-k)^2}\pmod{p^2}.
\end{align*}

Next, we split the numerator of $F(n)$ into two parts, namely,
$$\prod_{\substack{j=0 \\ j\neq
\frac{p-1}{4}}}^{p-1} \left(1+4j+4np\right)^2 =A(n)B(n),$$ where
\begin{align*}A(n)&=\prod_{j=1}^{(p-1)/4}\left(1+4\left(\frac{p-1}{4}-j\right)+4np\right)^2
\\ &\qquad\qquad\times \left(1+4\left(\frac{p-1}{4}+j\right)+4np\right)^2\\
&\equiv \prod_{j=1}^{(p-1)/4} 16^2j^4\pmod{p^2}
\end{align*}
and
\begin{align*}B(n)&=\prod_{k=1}^{(p-1)/4} \left(4np+2p+3+4\left(\frac{p-1}{4}-k\right)\right)^2\\
&\qquad\qquad\times
\left(4np+2p+3+4\left(\frac{p-1}{4}+k-1\right)\right)^2
\\
&=\prod_{k=1}^{(p-1)/4}\left(4np+3p-(4k-2)\right)^2\left(4np+3p+(4k-2)\right)^2
\\
&\equiv\prod_{k=1}^{(p-1)/4} \left(-4+16k-16k^2\right)\pmod{p^2}.
\end{align*}
The above computations show that both $A(n)\pmod{p^2}$ and
$B(n)\pmod{p^2}$ are independent of $n$. Hence, $F(n)\pmod{p^2}$ is
independent of $n$.
\end{proof}

Using \eqref{eq3.6}, we arrive at the following conclusion.

\begin{Corollary} \label{C4.1} For all positive integers $n$ and
$p\equiv 1\pmod{p^2}$, we have
$$F(n)\equiv F(0)\equiv 1\pmod{p^2}.$$
\end{Corollary}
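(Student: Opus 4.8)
The plan is to obtain Corollary~\ref{C4.1} directly from the two statements that immediately precede it, with no fresh computation needed. The Lemma just proved shows that $F(n)\pmod{p^2}$ does not depend on $n$, so $F(n)\equiv F(0)\pmod{p^2}$ for every $n\ge 0$; and \eqref{eq3.6} records $F(0)\equiv 1\pmod{p^2}$. Combining these two congruences by transitivity gives $F(n)\equiv F(0)\equiv 1\pmod{p^2}$, which is exactly the assertion. (The hypothesis printed as ``$p\equiv 1\pmod{p^2}$'' should be read as $p\equiv 1\pmod 4$, in line with \eqref{eq3.5}, \eqref{eq3.6}, and the preceding Lemma.)

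Before quoting those two inputs it is worth confirming that every factor occurring in the definition \eqref{eq3.5} of $F(n)$ is coprime to $p$, so that ``$F(n)\pmod{p^2}$'' is meaningful and transitivity of congruences modulo $p^2$ applies. In the numerator, the unique index $j$ with $0\le j\le p-1$ for which $1+4j+4np\equiv 1+4j\equiv 0\pmod p$ is $j=\frac{p-1}{4}$ (here one uses $p\equiv 1\pmod 4$, so that $4\cdot\frac{p-1}{4}=p-1\equiv-1\pmod p$), and that index is precisely the one excluded from the product. In the denominator, $1+i+np$ for $0\le i\le p-2$ runs through $np+1,\dots,np+p-1$, all congruent to $1,\dots,p-1$ modulo $p$ and hence coprime to $p$. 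Thus $F(n)$ is, for each $n$, a ratio of integers prime to $p$, and the reductions modulo $p^2$ carried out in the preceding Lemma are legitimate.

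I do not anticipate any real obstacle in this last step: the substance of the argument lies entirely in the earlier results that Corollary~\ref{C4.1} invokes---the pairing computations in the preceding Lemma, which show that the pieces $A(n)$, $B(n)$, and the denominator of $F(n)$ are each independent of $n$ modulo $p^2$, and Lemma~\ref{L2.1} together with \eqref{eq3.3}, which yields $s_p\equiv s_1\pmod{p^2}$ and hence \eqref{eq3.6}. Once the coprimality remark above is in place, the corollary follows in one line.
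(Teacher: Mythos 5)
Your proposal is correct and matches the paper exactly: the corollary is stated there as an immediate consequence of the preceding Lemma (independence of $F(n)\pmod{p^2}$ from $n$) together with \eqref{eq3.6}, with no further argument given. Your extra remark on coprimality to $p$ (and the reading of the typo ``$p\equiv 1\pmod{p^2}$'' as $p\equiv 1\pmod 4$) is a harmless, sensible addition.
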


\begin{proof}[Completion of the proof of \eqref{eq1.6}]
Our aim is to show that $$s_{np}\equiv s_n\pmod{p^2}$$ for all
positive integers $n$ and primes $p\equiv 1\pmod{4}$. We shall
accomplish this by an induction on $n$.

From \eqref{eq3.1}, we find that
\begin{equation} \label{eq4.1}
s_{n+1}=4\left(\frac{1+4n}{1+n}\right)^2s_n.
\end{equation}
Therefore
\[
   s_{n+k}=  4^k \prod_{i=0}^{k-1}\left(\frac{1+4(i+n)}{1+n+i}\right)^2s_n.
\]
In particular,
\begin{equation} \label{eq4.2}
   s_{n+p}=  4^p \prod_{i=0}^{p-1}\left(\frac{1+4(i+n)}{1+n+i}\right)^2s_n.
\end{equation}

Now, for the induction hypothesis, suppose that
\begin{equation}\label{eq4.3}s_{np}\equiv
s_n\pmod{p^2}.\end{equation} By \eqref{eq4.2}, we find that
\begin{align*}s_{(n+1)p}&=s_{np+p}=s_{np}4^p\prod_{i=0}^{p-1}\left(\frac{1+4(i+np)}{1+i+np}\right)^2\\
& \equiv s_n
4^p\prod_{i=0}^{p-1}\left(\frac{1+4(i+np)}{1+i+np}\right)^2\pmod{p^2},\end{align*}
where we used \eqref{eq4.3} in the last congruence. We observe that,
if
\begin{equation}\label{eq4.4} 4^p\prod_{i=0}^{p-1}\left(\frac{1+4(i+np)}{1+i+np}\right)^2\equiv 4\left(\frac{1+4n}{1+n}\right)^2\pmod{p^2},\end{equation} then we would have
$$s_{(n+1)p}\equiv s_n 4\left(\frac{1+4n}{1+n}\right)^2 \equiv s_{n+1}\pmod{p^2},$$ by \eqref{eq4.1}.
But the congruence \eqref{eq4.4} is exactly the congruence in
Corollary~\ref{C4.1}. This completes our proof of \eqref{eq1.6}.
\end{proof}

\section{A Lemma for the proof of \eqref{eq1.6.1}}

\begin{Lemma}\label{L5.1}
Let $p=6q+1$ be a prime. Then
$$4^p\left(\frac{1}{6}\right)_p\equiv
\left(\frac{2}{3}\right)_p\pmod{p^3}.$$
\end{Lemma}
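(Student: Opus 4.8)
The plan is to mimic the proof of Lemma~\ref{L2.0}, exploiting the arithmetic progressions modulo $6$ rather than modulo $4$. Writing $p=6q+1$, I first expand both Pochhammer symbols explicitly: $\left(\frac{1}{6}\right)_p=\prod_{k=0}^{p-1}\left(\frac{1}{6}+k\right)=6^{-p}\prod_{k=0}^{p-1}(1+6k)$ and $\left(\frac{2}{3}\right)_p=\prod_{k=0}^{p-1}\left(\frac{2}{3}+k\right)=3^{-p}\prod_{k=0}^{p-1}(2+3k)$. Exactly one factor on each side is divisible by $p$: on the left the factor with $k=q=\frac{p-1}{6}$, namely $1+6q=p$; on the right the factor with $3k\equiv -2\pmod p$, i.e. the index $k$ with $2+3k=p$ when $p\equiv1\pmod3$, which is $k=\frac{p-2}{3}$. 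After cancelling a factor of $p$ from each side, the claimed congruence mod $p^3$ reduces to a congruence mod $p^2$ between the two ``punctured'' products, so the task is to prove
\begin{equation*}
4^p\,6^{-p}\!\!\prod_{\substack{k=0\\ k\neq q}}^{p-1}\!(1+6k)\;\equiv\; 3^{-p}\!\!\prod_{\substack{k=0\\ 2+3k\neq p}}^{p-1}\!(2+3k)\pmod{p^2},
\end{equation*}
where the extracted factors of $p$ on the two sides are equal, hence cancel. Since $4^p6^{-p}=3^{-p}2^{-p}$ and $2^p\equiv 2\pmod p$ (but I will need to keep track mod $p^2$), after clearing the common $3^{-p}$ it remains to compare $2^{-p}\prod_{k\neq q}(1+6k)$ with $\prod_{k\neq q'}(2+3k)$.

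Next I would evaluate each punctured product mod $p^2$ by the pairing trick from Lemma~\ref{L2.0}. For the left product, pair the index $k$ with $2q-k$: since $(1+6k)+(1+6(2q-k))=2+12q=2p$, each such pair satisfies $(1+6k)(1+6(2q-k))\equiv -(1+6k-p)(\,\cdot\,)$... more directly, $(1+6(q-j))(1+6(q+j))=(p-6j)(p+6j)\equiv -36j^2\pmod{p^2}$ for $1\le j\le q$, which exhausts all indices $k\ne q$ in $\{0,\dots,p-1\}$? No — the indices $k=0,\dots,p-1$, $k\ne q$, split as $q-j$ and $q+j$ only for $j=1,\dots,q$ and $j$ ranging so that $q+j\le p-1$, i.e.\ $j\le 5q$; I will instead pair $k\leftrightarrow (p-1)/3 - \text{(suitable shift)}$ so that the two factors sum to a multiple of $p$, exactly as on p.~\pageref{eq:l(p)} the author pairs about the ``midpoint'' of each run. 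The honest approach is: group the factors $1+6k$, $k\ne q$, into two arithmetic runs (those below $p$ and those above), reverse one run, and multiply termwise so each product is $\equiv -(\text{quadratic in the running index})\pmod{p^2}$ with no $p$ left; repeat for $2+3k$. After both sides are reduced to explicit products of polynomials in a summation index (times powers of $2$ and $3$ and signs), I match them term by term; the matching is forced because both runs have length $(p-1)/2$ after removing the $p$-factor, and the quadratics coincide up to the substitution $k\mapsto$ affine shift.

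The step I expect to be the genuine obstacle is bookkeeping the power-of-two discrepancy: the factor $4^p$ on the left versus the absence of any analogous factor on the right means I must prove something like $2^{-p}\prod_{k\ne q}(1+6k)\equiv \prod_{k\ne q'}(2+3k)\pmod{p^2}$, and the cleanest route is probably to write $2+3k$ for the \emph{even} values of $2+3k$ as $2(1+3k/2)$ — which only works when $k$ is even — so instead I would split $\prod(2+3k)$ according to parity of $2+3k$ and use that half the residues mod $p$ are even, pulling out exactly $2^{(p-1)/2}$ and reassembling; combined with the Wilson-type evaluation $2^{p-1}\equiv 1+p\cdot(\text{Fermat quotient})$ this has to be handled mod $p^2$, not just mod $p$. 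As a sanity alternative, one can instead deduce the lemma from a $\,_2F_1$-type identity analogous to the Kummer transformation used for $s_n$, or reduce it to the already-proved binomial supercongruence \eqref{eq3.4} together with the $\frac13$-analogue of the Corollary after \eqref{eq2.4}; I would set up the parity-splitting argument first and fall back to \eqref{eq3.4} if the $2$-adic bookkeeping becomes unwieldy. Everything else is the same pairing-and-cancelling computation that drives Lemma~\ref{L2.0}, so modulo that one subtlety the proof is routine.
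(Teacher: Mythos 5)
There is a genuine gap, and also a concrete slip in the setup. The slip first: in $\left(\frac23\right)_p=3^{-p}\prod_{k=0}^{p-1}(2+3k)$ the factor divisible by $p$ is \emph{not} $p$ at $k=\frac{p-2}{3}$ (this index is not even an integer, since $p\equiv 1\pmod 3$); it is $2p$, occurring at $k=\frac{2p-2}{3}=4q$. So the multiples of $p$ extracted from the two sides are $p$ and $2p$ and do \emph{not} cancel: the leftover $2$, combined with $4^p6^{-p}\cdot 3^{p}=2^{p}$, leaves a residual factor $2^{p-1}=2^{6q}$, and the correct reduced statement is $2^{6q}\prod_{k\neq q}(1+6k)\equiv\prod_{k\neq 4q}(2+3k)\pmod{p^2}$ (this is exactly where the paper starts). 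Compare Lemma~\ref{L2.0}, where the extracted multiples are $p$ and $3p$ and their ratio is precisely the factor $3$ in the statement, which is why pure pairing suffices there.

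The gap is that your pairing-and-matching plan cannot dispose of this residual $2^{6q}$, since $2^{p-1}\not\equiv 1\pmod{p^2}$ in general, and you never actually resolve it: you correctly flag it as ``the genuine obstacle,'' but the parity-splitting sketch only shuffles powers of $2$ around, and both proposed fallbacks fail. The congruence \eqref{eq3.4} contains no $4^{p-1}$ and cannot produce one, and ``the $\frac13$-analogue of the Corollary after \eqref{eq2.4}'' is essentially Lemma~\ref{L5.1} itself, so invoking it is circular. In the paper, after all the pairings one is left with exactly a Fermat-quotient statement, namely $2\cdot\frac{2^{6q}-1}{p}\equiv H_{2q}-H_q\pmod p$, which is proved by combining the classical congruence $\frac{2^{p-1}-1}{p}\equiv H_{6q}-\frac12 H_{3q}\pmod p$ (Hardy--Wright, Theorem~132) with $H_{p-1}\equiv 0\pmod p$ and a further pairing identity $H_{3q}\equiv H_q-H_{2q}\pmod p$. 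Some input of this kind (or an equivalent, e.g.\ Morley-type congruences involving $4^{p-1}$) is indispensable; without it your argument reduces the lemma to an unproved statement about $2^{p-1}$ modulo $p^2$, so the proof is incomplete.
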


\begin{proof}
We want to reduce the congruence to one that we can manage. Clearing
denominators and dividing the terms which are multiples of $p$ on
both sides, we see that we need to prove that
\begin{align*} 2^{6q}1\cdot
7\cdots &(6q-5)(6q+7)\cdots (36q+1)\\ &\equiv 2\cdot 5\cdots
(12q-1)(12q+5)\cdots (18q+2) \pmod{p^2}.\end{align*} We next match
the terms $6q+1-6k$ to $6q+1+6k$ for $1\leq k\leq q$ and simplify
the left-hand side to
$$2^{6q}\prod_{k=1}^q (6q+1-6k)(6q+1+6k) \cdot M(q)\equiv
2^{8q}\cdot 3^{2q}\prod_{k=1}^q (-k^2)\cdot M(q)\pmod{p^2},$$ where
$$M(q) = \prod_{k=1}^{4q} (12q+1+6k).$$
But $M(q)$ can also be expressed as
$$M(q) =\prod_{k=1}^{2q} (24q+4-(6k-3))(24q+4+6k-3)
\equiv 3^{4q}\prod_{k=1}^{2q} (2k-1)^2 \pmod{p^2}.$$ Hence the
left-hand side is
$$2^{8q}\cdot 3^{6q}\prod_{k=1}^q (-k^2)\prod_{k=1}^{2q} (2k-1)^2.$$
Similarly, the right-hand side can be expressed as
$$\prod_{k=1}^{2q}(12q+2-3k)(12q+2+3k)\cdot N(q) \equiv
3^{4q}\prod_{k=1}^{2q} k^2 \cdot N(q)\pmod{p^2},$$ where $N(q)$ is
given by
\begin{align*}N(q)  &= \prod_{k=1}^q
\left(3q+\frac{1}{2}-\frac{6k-3}{2}\right)
\left(3q+\frac{1}{2}+\frac{6k-3}{2}\right)
\\ &\equiv \left(\frac{3}{2}\right)^{2q}\prod_{k=1}^q
(-(2k-1)^2)\pmod{p^2}.\end{align*} Simplifying both sides, we
observe that we need to prove that
$$2^{10q}\prod_{k=q+1}^{2q} (2k-1)^2 \equiv \prod_{k=q+1}^{2q} k^2 \pmod{p^2}.$$
We rewrite both sides, so that the above congruence turns out to be
equivalent to
$$2^{10q} \prod_{k=q+1}^{2q}(p-(2k-1))(p+(2k-1))\equiv \prod_{k=q+1}^{2q}(p-k)(p+k)\pmod{p^2}.$$
This leads to
$$2^{11q}\prod_{k=q+1}^{2q} (p-(2k-1))\equiv \prod_{k=q+1}^{2q} (p+k)\pmod{p^2},$$
since
$$\prod_{k=q+1}^{2q} (p+(2k-1))=2^{q}\prod_{k=q+1}^{2q} (p-k).$$
Now rewriting
$$\prod_{k=q+1}^{2q} (p-(2k-1)) = 2^q \prod_{k=q+1}^{2q} (3q-k+1),$$
we see that we must show that
\begin{multline*}
2^{12q}(q+1)(q+2)\cdots (2q)\\
\equiv (q+1)(q+2)\cdots (2q)
\left(1+p\left(H_{2q}-H_q\right)\right)\pmod{p^2},
\end{multline*}
where
$$H_n=\sum_{k=1}^{n} \frac{1}{k}.$$
Equivalently, we need to verify that
$$\frac{2^{6q}-1}{p}\cdot 2 \equiv H_{2q}-H_q \pmod{p}.$$
But it is known (see \cite[Theorem~132]{Hardy-Wright}) that
$$\frac{2^{p-1}-1}{p}\equiv H_{6q}-\frac{H_{3q}}{2}\pmod{p}.$$
Since $$H_{6q}\equiv 0\pmod{p},$$ it suffices to show that
$$-H_{3q}+H_q-H_{2q}\equiv 0\pmod{p}.$$
Observe that $$H_{3q}=1+\frac{1}{2}+\cdots
+\frac{1}{q}+H_{2q}-H_q+\frac{1}{2q+1}+\cdots +\frac{1}{3q}.$$ Now,
for $1\leq i\leq q$, we pair the terms in the sums at both ends as
follows:
$$\frac{1}{i}+\frac{1}{3q+1-i}=\frac{3q+1}{i(3q+1-i)}\equiv
\frac{1}{2i(3q+1-i)}\equiv \frac{1}{i}-\frac{2}{2i-1}\pmod{p}.$$
Hence, we deduce that
$$H_{3q}\equiv H_q-2\left(H_{2q}-\frac{H_q}{2}\right)+H_{2q}-H_q
\equiv -H_{2q}+H_q\pmod{p},$$ which completes the proof of the
lemma.
\end{proof}

We are now ready to show that if $$t_n = 108^n
\frac{\left(\frac{1}{6}\right)_n\left(\frac{1}{3}\right)_n}{(1)_n^2}$$
then \begin{equation}\label{eq5.1} t_p\equiv
t_1\pmod{p^2}\end{equation} for all primes $p\equiv 1\pmod{6}.$ By
Lemma~\ref{L5.1},
\begin{equation}\label{eq5.0.5} t_p\equiv
27^p\frac{\left(\frac{2}{3}\right)_p\left(\frac{1}{3}\right)_p}{(1)_p^2}\pmod{p^2}.\end{equation}
But the last expression can be written as
$$\binom{3p}{p}\binom{2p}{p}\equiv 6\equiv t_1\pmod{p^2},$$
by using \eqref{eq3.4}. This completes the proof of \eqref{eq5.1}.

The proof of \eqref{eq1.6.1} for $n>1$ is similar to the proof of
\eqref{eq1.6}. We will simply list the corresponding identities that
are needed in the proof. These are:

\begin{enumerate}[(i)]

\item The sequence $t_n$ satisfies
$$t_{n+1}=6\frac{(1+6n)(1+3n)}{(1+n)^2}t_n$$
and $$t_{n+p} = t_n 6^p\prod_{i=0}^{p-1}
\frac{(1+6n+6i)(1+3n+3i)}{(1+n+i)^2}.$$
\item The expression $$G(n) = 6^{p-1}\prod_{\substack{j=0 \\ j\neq
\frac{p-1}{6}}}^{p-1}(1+6j+6np)\prod_{\substack{j=0 \\ j\neq
\frac{p-1}{3}}}^{p-1}(1+3j+3np)\prod_{i=0}^{p-2}\frac{1}{(1+i+np)^2}$$
is independent of $n$ modulo $p^2$, and $$G(n)\equiv G(0)\equiv
1\pmod{p^2}.$$

%

\end{enumerate}

The proofs of (i) and (ii) are similar to those presented in
Section~4.

\medskip\noindent \emph{Acknowledgments}.  The first author was
supported by NUS Academic Research Grant R-146-000-103-112. The work
was carried out when the first author was visiting the
Max-Planck-Institut f\"ur Mathematik (MPIM). He thanks the MPIM for
providing a nice research environment. He also likes to take the
opportunity to thank Elisavet Konstantinou for inviting him to the
University of the Aegean, where he met the second author and had
many fruitful discussions.

The third author was partially supported by the Austrian Science
Foundation FWF, grants Z130-N13 and S9607-N13, the latter in the
framework of the National Research Network ``Analytic Combinatorics
and Probabilistic Number Theory."

The fourth author was partially supported by Science Foundation
Ireland 08/RFP/MTH1081.

\begin {thebibliography}{99}

\bibitem{AperAA} R. Ap\'ery, {\em Irrationalit{\'e} de $\zeta(2)$
et $\zeta(3)$}, in: Journ\'ees arithm\'etiques (Luminy, 1978),
Ast{\'e}risque {\bf 61} (1979), 11--13.

\bibitem{Bailey}  D. F. Bailey,
{\em Two $p^3$ variations of Lucas' theorem}, J. Number Theory {\bf
35} (1990), 208--215.

\bibitem{Beukers-87} F. Beukers, {\em Irrationality proofs using
modular forms}, Journ\'ees arithm\'etiques (Besan\c con, 1985),
Ast\'erisque {\bf 147-148} (1987), 271--283.

\bibitem{Borweins-Garvan} J. M. Borwein, P. B. Borwein, and
F. G. Garvan, {\em Hypergeometric analogues of the
arithmetic-geometric mean iteration}, Constr. Approx. {\bf 9}
(1993), 509--523.

\bibitem{CCL} H. H. Chan, S. H. Chan and Z. G. Liu, {\em Domb's numbers and
   Ramanujan-Sato type series for $1/\pi$},
   Adv. Math. {\bf 186} (2004), 396--410.

   \bibitem{Chan-Chua-Sole} H. H. Chan, K. S. Chua and P. Sol\'e,
{\em  Quadratic iterations to $\pi$ associated with elliptic
functions to
   the cubic and septic base},
   Trans. Amer. Math. Soc. {\bf 355} (2003), 1505--1520.

\bibitem{CCS} H. H. Chan, S. Cooper and F. Sica,
{\em Congruences satisfied by Ap\'ery-like numbers}, Int. J. Number
   Theory, to appear.

\bibitem{CCC} S. Chowla, J. Cowles and M. Cowles, {\em Congruences
properties of Ap\'ery numbers}, J. Number Theory {\bf 12} (1980),
188--190.

\bibitem{Gessel} I. M. Gessel, {\em Some congruences for the Ap\'ery
numbers}, J. Number Theory {\bf 14} (1982), 362--368.

\bibitem{Hardy-Wright} G. H. Hardy and E. M. Wright,
{\em An introduction to the theory of numbers}, fifth edition. The
Clarendon Press, Oxford University Press, New York, 1979.

\bibitem{Kaz} G. S. Kazandzidis, {\em Congruences on the binomial
coefficients}, Bull. Soc. Math. Gr\`ece (N.S.) {\bf 9} (1968),
1--12.

\bibitem{KZ} M. Kontsevich and D. Zagier, {\em Periods}, Mathematics
Unlimited --- 2001 and beyond, Springer, Berlin, 2001, pp. 771--808.


\end{thebibliography}

\end{document}